\newtheorem{lemma}{Lemma}[section]
 \newtheorem{con}{Conjecture}[section]
 \newtheorem{open problem}{Open problem}[section]
 \newtheorem{prop}{Proposition}[section]
 \newenvironment{proof}{\trivlist
      \item[\hskip\labelsep
      {\itshape Proof.}]\normalfont}
      {\hspace*{\fill}$\Box$\endtrivlist}
\begin{document}

\title{An elementary proof of a result Ma and Chen\\ }

\author{ Qing Han \thanks{Q. Han is with Faculty of Common Courses, South China Business College of Guangdong university of foreign studies£¬Guangzhou, 510545, China  (email: 46620467@qq.com).  }\,\,  Pingzhi Yuan\thanks{P. Yuan is with School of Mathematics, South China Normal University, Guangzhou 510631, China (email: 20091004@m.scnu.edu.cn).} }

\date{}
\maketitle
 \edef \tmp {\the \catcode`@}
   \catcode`@=11
   \def \@thefnmark {}

    \@footnotetext { Supported by  NSF of China  (Grant No.  11671153).}

\begin{abstract} In 1956, Je$\acute{s}$manowicz conjectured that, for positive integers $m$ and $n$  with $m>n, \, \gcd(m,\, n)=1$ and $m\not\equiv n\pmod{2}$, the exponential Diophantine equation $(m^2-n^2)^x+(2mn)^y=(m^2+n^2)^z$ has only the positive integer solution $(x,\,y,\, z)=(2,\,2,\,2)$. Recently, Ma and Chen \cite{MC17} proved the conjecture if $4\not|mn$ and $y\ge2$. In this paper, we present an elementary  proof of the result of Ma and Chen \cite{MC17}.
\end{abstract}

{\bf Keywords :} Pythagorean triple, Je$\acute{s}$manowicz conjecture, exponential Diophantine equations.

{\bf 2010 Mathematics Subject Classification:} primary 11D61, secondary 11D41.
 \catcode`@=\tmp
 \let\tmp = \undefined

\vskip 3mm
\section{\bf Introduction}
Let $ a,\, b$ and $c$  be  positive integers satisfying $a^2+b^2=c^2$. Such a triple $(a,\,b,\,c)$ is called a $Pythagorean\,\, triple$. If $\gcd(a,\,b,\, c)=1$, this triple is called $primitive$. It is well-known that a primitive Pythagorean triple $(a,\,b,\,c)$ can be parameterized by
$$ a=m^2-n^2, \quad b=2mn, \quad c=m^2+n^2,$$
where $m$ and $n$ are relatively prime positive integers with $m>n$ and $m\not\equiv n\pmod{2}$. In 1956, Je$\acute{s}$manowicz \cite{J55} proposed the following problem.

\begin{con}\label{conj1} The exponential Diophantine equation
\begin{equation}\label{eq1}
(m^2-n^2)^x+(2mn)^y=(m^2+n^2)^z\end{equation}
has only one positive integer solution $(x, \, y,\,z)=(2, \,2,\,2)$. \end{con}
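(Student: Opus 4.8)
The statement is the celebrated Je\'smanowicz conjecture, which is open in full generality; what follows is therefore the natural line of attack, organized so that it would prove the full statement if every case could be closed, rather than a complete argument. The plan is to show that beyond the trivial solution no other triple $(x,y,z)$ can occur, for \emph{all} coprime $m>n$ of opposite parity. First I would record that $(x,y,z)=(2,2,2)$ is indeed a solution, since it is exactly the Pythagorean identity $(m^2-n^2)^2+(2mn)^2=(m^2+n^2)^2$. The goal is then to rule out every other positive integer triple by combining congruence restrictions with size and factorization constraints.

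The first real step is to pin down the parities and coarse sizes of $x,y,z$. Reducing \eqref{eq1} modulo $4$, and then modulo each of $m^2-n^2$, $2mn$, and $m^2+n^2$ in turn, forces strong relations among the exponents; for instance one expects to deduce that $x$ and $z$ share the same parity and that $y$ is even in the main configurations. A convenient organizing principle is to split according to the $2$-adic valuation of $mn$: the case in which $mn$ is exactly divisible by $2$ (that is, $4\not|mn$) behaves very differently from the case $4\mid mn$, because the power of $2$ dividing the middle term $(2mn)^y$ controls the $2$-adic comparison of the two sides. Within each case one further distinguishes $y\ge2$ from $y=1$, since $y=1$ strips away the extra factors of $2$ that drive the modular argument. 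Covering \emph{all} of $m,n$ and \emph{all} exponent ranges requires treating each of these four regimes, namely $4\not|mn$ or $4\mid mn$, each with $y\ge2$ or $y=1$.

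The heart of the argument is the subcase $4\not|mn$ with $y\ge2$, which is exactly the result of Ma and Chen \cite{MC17}: here I would use elementary factorization of $(m^2+n^2)^z-(2mn)^y$ together with a primitive-divisor (Zsygmondy-type) argument to force $z=2$, and then descend to $x=y=2$. The main obstacle is that no single elementary mechanism is known to cover the remaining regimes uniformly. In particular, the case $4\mid mn$ and the boundary case $y=1$ resist the modular and factorization bounds that settle $4\not|mn$ with $y\ge2$, and bounding large solutions there appears to demand deep tools such as linear forms in logarithms rather than elementary congruences. This gap is precisely why the conjecture remains open, and it is the reason the present paper restricts itself to an \emph{elementary} reproof of the Ma--Chen case $4\not|mn,\ y\ge2$ rather than establishing the conjecture as worded in full.
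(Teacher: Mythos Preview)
You are right that this statement is Je\'smanowicz' conjecture, open in full generality, and the paper makes no claim to prove it; the paper's contribution is an elementary reproof of Proposition~1.1 (the Ma--Chen case $4\nmid mn$, $y\ge2$). There is thus no proof of Conjecture~\ref{conj1} in the paper to compare your proposal against, and your assessment of the overall situation---including the explicit acknowledgment that the remaining regimes $4\mid mn$ and $y=1$ are the genuine obstruction---is accurate.

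That said, your one-line sketch of how the Ma--Chen subcase itself would go does not match what the paper actually does. You suggest factoring $(m^2+n^2)^z-(2mn)^y$ and invoking a Zsygmondy/primitive-divisor argument to force $z=2$ first. The paper instead proceeds as follows: it shows $x$ and $z$ are both even via Jacobi symbols modulo $m^2+n^2$ together with a mod~$8$ computation (Lemma~\ref{le3}); it disposes of the case $2\mid y$ by a descent on primitive Pythagorean triples using $m^2+n^2\equiv5\pmod8$ (Proposition~\ref{th2.1}); and for odd $y\ge3$ it factors the \emph{other} difference $(m^2+n^2)^z-(m^2-n^2)^x$ as a telescoping product indexed by the common $2$-adic valuation $t$ of $x$ and $z$, then carries out an intricate Jacobi-symbol computation, splitting on the parity of $t$, to reach a contradiction. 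No primitive-divisor input appears, and the argument never isolates $z=2$ directly---the contradiction comes purely from quadratic reciprocity. So even within the one subcase the paper does treat, your proposed mechanism and the paper's are substantively different.
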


Using elementary methods, Le \cite{L99} showed that if $mn\equiv 2\pmod{4}$ and $m^2+n^2$ is a power of a prime, then Conjecture \ref{conj1} is true. Guo, Le \cite{GL95} applied the theory of linear forms in two logarithms to prove that if $n=3,\, m\equiv2\pmod{4}$ and $m>6000$, then Conjecture \ref{conj1} is true. Takakuwa \cite{Ta96} extended the result of Guo, Le \cite{GL95} by proving that if $n=3,\,7,\,11,\,15$ and $m\equiv2\pmod{4}$, then Conjecture \ref{conj1} is true. Cao \cite{Ca99} also showed that if $m\equiv5\pmod{8}$ and $n\equiv2\pmod{8}$, then Conjecture \ref{conj1} is true.
In 2014, Terai \cite{Te14} showed that if $n=2$,  then Conjecture \ref{conj1} is true without any assumption on $m$.  In 2015, Miyazaki and Terai \cite{MT15} proved some further results.

Recently, Ma and Chen \cite{MC17} proved the following proposition.

\begin{prop} Suppose that $4\not| mn$. Then the equation
$$(m^2-n^2)^x+(2mn)^y=(m^2+n^2)^z, \quad y\ge2,$$
has only the positive integer solution $(x, \, y, \, z)=(2, \, 2, \, 2)$.\end{prop}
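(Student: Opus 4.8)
The plan is to exploit the rigid $2$-adic structure that the hypothesis $4\nmid mn$ provides. Set $a=m^2-n^2$, $b=2mn$, $c=m^2+n^2$, so the claim is that $(x,y,z)=(2,2,2)$ is the only positive solution of $a^x+b^y=c^z$ with $y\ge 2$. Since $m\not\equiv n\pmod 2$ and $4\nmid mn$, exactly one of $m,n$ is $\equiv 2\pmod 4$; writing $M,N$ for $m,n$ with the even one halved, we have $\gcd(M,N)=1$, $MN$ odd, $b=4MN$, so $v_2(b^y)=2y\ge 4$. I will also use $c-a=2n^2$, $c+a=2m^2$ (hence $v_2(c-a)+v_2(c+a)=4$), $c\equiv 5\pmod 8$, $a\equiv 3$ or $5\pmod 8$ according as $m$ or $n$ is even, the coprimalities $\gcd(a,b)=\gcd(b,c)=\gcd(a,c)=1$, and the identity $b^2=c^2-a^2$.

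\emph{Step 1 (parities).} First I would show that $x$ and $z$ are both even. When $m$ is even this is quick: reducing $a^x+b^y=c^z$ mod $4$ (note $4\mid b^y$) forces $x$ even since $a\equiv 3\pmod 4$; then $a^x\equiv 1\pmod 8$, and reducing mod $8$ (note $8\mid b^y$) forces $z$ even since $c\equiv 5\pmod 8$. When $n$ is even one has $a\equiv c\equiv 5\pmod 8$ and powers of $2$ alone are not enough; here I would bring in a reduction modulo a prime divisor of $m$ (or modulo $m^{2}$, valid since $y\ge 2$ gives $m^{2}\mid b^{y}$) together with a quadratic-residue argument modulo a prime divisor $p$ of $c$ --- using that every odd prime factor of the primitive sum of two squares $c$ is $\equiv1\pmod4$, and that $\bigl(\tfrac{2}{p}\bigr)=\bigl(\tfrac{i}{p}\bigr)$ whenever $i^{2}\equiv-1\pmod p$ --- to conclude again that $x$, hence $z$, is even.

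\emph{Step 2 (descent).} Write $x=2X$, $z=2Z$. Then $(a^{X})^{2}+b^{y}=(c^{Z})^{2}$ is a \emph{primitive} Pythagorean relation since $\gcd(a,c)=1$, so $b^{y}=(c^{Z}-a^{X})(c^{Z}+a^{X})$ with both factors even and gcd exactly $2$. Matching $2$-adic valuations (one factor has valuation $1$, the other $2y-1$) and using that the coprime odd parts of a perfect $y$-th power are themselves $y$-th powers, we get
\[
c^{Z}=s^{y}+2^{2y-2}t^{y},\qquad a^{X}=\bigl|\,s^{y}-2^{2y-2}t^{y}\,\bigr|,\qquad st=MN,\quad\gcd(s,t)=1 .
\]
The conjectured solution corresponds to $X=Z=1$, $y=2$, $\{s,t\}=\{M,N\}$; and $X=Z=1$ already forces $y=2$ (then $b^{y}=c^{2}-a^{2}=b^{2}$). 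So it suffices to exclude $\max(X,Z)\ge 2$.

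\emph{Step 3 (endgame and main obstacle).} Since $st=MN=mn/2$ we have $\max(s,t)\le mn/2$, so $c^{Z}\le(1+2^{2y-2})(mn/2)^{y}$; together with $c=m^{2}+n^{2}\ge 2mn$ this yields a contradiction for $Z\ge 2$ \emph{when $y=2$} (the resulting inequality $4\cdot2^{y}\le(1+2^{2y-2})(mn)^{y-2}$ fails at $y=2$), and the companion equation handles $X\ge 2$ when $y=2$ after an argument with the arithmetic of the two representations $c=s^{2}+(2t)^{2}=m^{2}+n^{2}$ of $c$ as a sum of two squares (alternatively one may use the rearrangement $a^{2}(a^{x-2}-1)=c^{2}(c^{z-2}-1)$ coming from $b^{2}=c^{2}-a^{2}$). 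The genuine difficulty is the range $y\ge 3$, where these crude bounds no longer close: there I would sharpen the size estimates via AM--GM applied to $s^{y}\cdot 2^{2y-2}t^{y}=2^{y-2}(mn)^{y}$ (which, when $Z=1$, forces $m>2n^{3}$), and combine this with reductions of the two displayed equations modulo $s$, modulo $t$, and modulo powers of $2$, organised by the sub-cases $s=1$, $t=1$, and $s,t>1$. I expect ruling out $y\ge 3$ to be the crux of the whole argument, and it is precisely there that the hypothesis $4\nmid mn$ --- through the exact valuation $v_2(b^{y})=2y$ and the consequent rigid shape of the descent --- is indispensable.
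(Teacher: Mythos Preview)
Your proposal is not a proof: Step~3 is explicitly left open for $y\ge 3$, and after one reduces to $y$ odd (which is forced, see below) that is the \emph{entire} problem. The size estimates you sketch cannot close it. Your bound $c^{Z}\le(1+2^{2y-2})(mn/2)^{y}$ only excludes $Z\ge 2$ when $y=2$; for $y\ge 3$ the right-hand side grows faster than any fixed power of $c$, so AM--GM refinements and congruences modulo $s,t$ will not yield a contradiction in general. Your single-level descent $x=2X$, $z=2Z$ loses too much information: you have not established, and do not use, that $v_2(x)=v_2(z)$, which is the structural fact that makes the problem tractable.

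The paper's argument is organised quite differently. First it disposes of the case $2\mid y$ separately (Proposition~2.1), using that $c\equiv 5\pmod 8$ forces $2\mid z$, and then Fermat's theorem that $u^4-v^4=w^2$ has no nontrivial solutions to kill $4\mid\gcd(x,z)$; this is a cleaner route than your sum-of-two-squares uniqueness idea, which fails when $c$ is not prime. For $y$ odd (hence $y\ge 3$), the paper shows via a mod~$2^{t+3}$ computation that $v_2(x)=v_2(z)=t$ exactly, writes $x=2^{t}X$, $z=2^{t}Z$ with $XZ$ odd, and factors
\[
c^{z}-a^{x}=\bigl(c^{Z}-a^{X}\bigr)\prod_{i=0}^{t-1}\bigl(c^{2^{i}Z}+a^{2^{i}X}\bigr)
\]
into $t+1$ pairwise nearly-coprime pieces, each of the form $2(m_in_i)^{y}$ or $2^{2y-t}(m_{t+1}n_{t+1})^{y}$ with $m_i\mid m$, $n_i\mid n'$. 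Congruences mod~$8$ pin down $m_i,n_i\pmod 8$, and a chain of Jacobi-symbol reciprocities (split into the cases $t$ even and $t$ odd) produces $1=-1$. This is where the real work lies, and nothing in your Step~3 approaches it.

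Finally, your Step~1 for the sub-case $n$ even is only a gesture. The clean way (the paper's Lemma~2.3) is a single Jacobi-symbol computation modulo $c=m^2+n^2$: one checks $\left(\frac{2mn}{c}\right)=1$ and $\left(\frac{m^2-n^2}{c}\right)=\left(\frac{2}{c}\right)=-1$ (since $c\equiv 5\pmod 8$), whence $(-1)^x=1$, i.e.\ $x$ is even; then $5^{z}\equiv 1\pmod 8$ gives $z$ even.
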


Deng and Huang\cite{DHB17}, Deng and Guo  \cite{DG17} proved some theorems for $2||mn$ by using biquadratic character theory and an elementary method. For more results on the conjecture, see \cite{DC98, FM12, L59, HY18, MiBAM09,  M11, M13, M15, M18, MYW14,  TY13, Te99, YH18, ZZ14}.

For the proof of the above Proposition 1.1, Ma and Chen \cite{MC17} used some complicated computations of Jacobi's symbols and a known result of Miyazaki (\cite{M11} Theorem 1.5), which is based on deep results on generalized Fermat equations via sophisticated arguments in the theory of elliptic curves and modular forms. We also note that the proof of the main result in  Terai \cite{Te14} used the same known result of Miyazaki (\cite{M11} Theorem 1.5).

In this paper,  we  present an elementary  proof of Proposition 1.1 by using Jacobi's symbols, however the computations of Jacobi's symbols are more involved here.

\section{Some Lemmas}

For more self-contained, in this section, we provide some simple lemmas which will be used in the proof of Proposition  1.1. The following two results are well-known.
   \begin{lemma}\label{le1}  Let $(u,\,v,\,w)$  be a primitive Pythagorean triple such that $u^2 +v^2 =w^2$, \,  $2|v$  and $w\equiv5\pmod{8}$. Then there exists coprime positive integers $s$ and $t$ with $s>t$, $2||st$ and
   $$u=s^2-t^2, \quad v=2st, \quad w=s^2+t^2.$$
   \end{lemma}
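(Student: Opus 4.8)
The plan is to reduce at once to the classical parameterization of primitive Pythagorean triples, and then to squeeze the extra $2$-adic information out of the hypothesis $w\equiv 5\pmod 8$.

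First I would invoke the well-known classification of primitive Pythagorean triples: since $u^2+v^2=w^2$ is primitive and $2\mid v$, there are coprime positive integers $s>t$ with $s\not\equiv t\pmod 2$ such that $u=s^2-t^2$, $v=2st$ and $w=s^2+t^2$. So the displayed identities and the coprimality of $s$ and $t$ come for free, and the only point left to check is $2\,\|\,st$; since exactly one of $s,t$ is even, this amounts to showing that the even member of $\{s,t\}$ is $\equiv 2\pmod 4$.

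To obtain this I would simply work modulo $8$. An odd square is $\equiv 1\pmod 8$, so reducing $w=s^2+t^2$ modulo $8$ and using $w\equiv 5\pmod 8$ forces the square of the even member of $\{s,t\}$ to be $\equiv 4\pmod 8$. But a multiple of $4$ has square $\equiv 0\pmod{16}$, hence $\equiv 0\pmod 8$; therefore the even member of $\{s,t\}$ is not divisible by $4$, i.e.\ it is $\equiv 2\pmod 4$, which together with the oddness of the other member gives $2\,\|\,st$, as desired.

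I do not expect any genuine obstacle here: the lemma is elementary and the argument is short. The only thing worth emphasizing is where the hypothesis is actually used: the congruence $w\equiv 5\pmod 8$ enters exactly once, precisely to rule out the even parameter being $\equiv 0\pmod 4$ (note that $w=s^2+t^2$ is automatically $\equiv 1\pmod 4$, so this is genuinely a condition modulo $8$ and not modulo $4$).
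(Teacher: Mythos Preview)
Your proposal is correct and follows exactly the same approach as the paper: invoke the classical parameterization of primitive Pythagorean triples and then deduce $2\,\|\,st$ from $w\equiv 5\pmod 8$. The paper's own proof is in fact terser than yours---it simply asserts that the parameterization is obvious and that $2\,\|\,st$ ``follows from the condition $w\equiv 5\pmod 8$''---so your write-up supplies the modulo-$8$ details that the paper leaves implicit.
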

     \begin{proof} The
others being obvious, only $2||st$ needs a proof, this follows from the condition $w\equiv5\pmod{8}$. \end{proof}

   \begin{lemma}\label{le2}The equation
   $x^4-y^4=z^2$ has no nonzero integer solutions.\end{lemma}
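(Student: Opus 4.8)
The plan is to establish Lemma~\ref{le2} by Fermat's method of infinite descent, the only external input being the standard parametrization of primitive Pythagorean triples recalled in the introduction. I would argue by contradiction: assuming $x^4-y^4=z^2$ has a solution in nonzero integers, I replace $x,y,z$ by their absolute values and pick a solution with $x$ minimal. A common prime factor $p$ of $x$ and $y$ would give $p^4\mid z^2$, hence $p^2\mid z$, and $(x/p,\,y/p,\,z/p^2)$ would be a smaller solution; so $\gcd(x,y)=1$, and $z^2=x^4-y^4>0$ forces $x>y$. The key observation is that $(y^2)^2+z^2=(x^2)^2$ exhibits $(y^2,\,z,\,x^2)$ as a primitive Pythagorean triple (a prime dividing both $y^2$ and $z$ would divide $x^4$, hence $x$). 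Exactly one of the legs $y^2,z$ is even, which splits the argument into two cases.

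If $z$ is even, the parametrization gives coprime $s>t\ge 1$ of opposite parity with $y^2=s^2-t^2$, $z=2st$, $x^2=s^2+t^2$; multiplying the first and third relations yields $(xy)^2=s^4-t^4$, so $(s,\,t,\,xy)$ is another nonzero solution of $X^4-Y^4=Z^2$, and $s^2<s^2+t^2=x^2$ forces $s<x$, contradicting minimality. If instead $y$ is even (so $x$ and $z$ are odd), the parametrization gives $z=s^2-t^2$, $y^2=2st$, $x^2=s^2+t^2$ with $s,t$ coprime of opposite parity; writing $s=2\sigma$ (the subcase $t$ even being symmetric), from $\sigma t=(y/2)^2$ and $\gcd(\sigma,t)=1$ one gets $\sigma=a^2$, $t=b^2$ with $b$ odd and $\gcd(a,b)=1$, so that $x^2=(2a^2)^2+(b^2)^2$ makes $(b^2,\,2a^2,\,x)$ a primitive Pythagorean triple. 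A second application of the parametrization gives $b^2=p^2-q^2$, $a^2=pq$, $x=p^2+q^2$, and $\gcd(p,q)=1$ then forces $p=e^2$, $q=f^2$, whence $b^2=e^4-f^4$; thus $(e,\,f,\,b)$ is again a nonzero solution, with $e^2=p<p^2+q^2=x$, so $e<x$, again a contradiction.

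I expect the only real work to be the routine bookkeeping: checking that each auxiliary triple stays primitive, that the extracted integers $a,b,e,f$ are nonzero, and that the first coordinate strictly decreases in each case. The mild subtlety is the case $y$ even, which needs two nested applications of the Pythagorean parametrization before the descent step appears; no idea beyond descent is involved. Since both cases produce a strictly smaller nonzero solution, infinite descent yields the contradiction and the lemma follows.
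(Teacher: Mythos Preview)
Your descent argument is correct and is essentially the classical proof due to Fermat. Note, however, that the paper does \emph{not} give its own proof of this lemma: it simply writes ``For the proof of the above Lemma, we refer to Mordell~\cite{Mo69}.'' So rather than matching or diverging from the paper's argument, you are supplying the proof the authors chose to outsource. What you wrote is precisely the standard treatment one finds in Mordell (and in most number-theory texts): minimal counterexample, reduction to a primitive Pythagorean triple $(y^2,z,x^2)$, and a case split on the parity of $z$, with the even-$y$ case requiring a second application of the parametrization before the descent step appears. All the bookkeeping you flag (primitivity of the auxiliary triples, nonvanishing of $a,b,e,f$, strict decrease of the first coordinate) checks out; in particular your inequality $e^2=p<p^2+q^2=x$ indeed forces $e<x$ since $e,x$ are positive integers. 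In short: your proof is sound and self-contained, whereas the paper relies on a citation.
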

For the proof of the above Lemma, we refer to  Mordell \cite{Mo69}.
\begin{prop} \label{th2.1} Let $m, \, n$ be coprime positive integers with $m^2+n^2\equiv5\pmod{8}$ and $m>n$, then the Diophantine equation
  \begin{equation}\label{eqeven} (m^2-n^2)^x+(2mn)^y=(m^2+n^2)^z\end{equation} has only the positive integer solution  $x=y=z=2$ with $2|\gcd(x, \, y)$.\end{prop}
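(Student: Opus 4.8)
My plan is to reduce the equation to a primitive Pythagorean triple, pin down the parity of the exponent of the hypotenuse $c=m^2+n^2$ with one Jacobi symbol, and finish with elementary size estimates. Write $a=m^2-n^2$, $b=2mn$, $c=m^2+n^2$, so $a^2+b^2=c^2$ and $a,b,c$ are pairwise coprime. Since the squares modulo $8$ are $0,1,4$, the hypothesis $c\equiv 5\pmod 8$ forces one of $m,n$ odd and the other $\equiv 2\pmod 4$; hence $2\|mn$, so $v_2(b)=2$, and $a=m^2-n^2\equiv\pm 3\pmod 8$, while $a\ge 3$ and $c\ge 5$. Now let $(x,y,z)$ be a positive solution with $2\mid\gcd(x,y)$, say $x=2x_1$, $y=2y_1$ with $x_1,y_1\ge 1$. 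Then $a^{2x_1}=(a^2)^{x_1}\equiv 1\pmod 8$ while $2^{4y_1}\mid b^{2y_1}$, so $c^z=a^{2x_1}+b^{2y_1}\equiv 1\pmod 8$; as $c\equiv 5\pmod 8$ this forces $z$ even, $z=2z_1$. Consequently $(a^{x_1},b^{y_1},c^{z_1})$ is a primitive Pythagorean triple with $b^{y_1}$ even, so there are coprime integers $g>h\ge 1$ of opposite parity with
\[
a^{x_1}=g^2-h^2,\qquad b^{y_1}=2gh,\qquad c^{z_1}=g^2+h^2.
\]

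The crucial point---which I expect to be the main obstacle---is that $z_1$ is odd, and here is where the Jacobi symbol does the real work. From $a\mid a^{x_1}=g^2-h^2$ we get $g^2\equiv h^2\pmod a$; a prime dividing $g$ and $a$ would also divide $h$, so $\gcd(g,a)=1$. Since $c=a+2n^2\equiv 2n^2\pmod a$, reducing $c^{z_1}=g^2+h^2$ modulo $a$ and using $h^2\equiv g^2$ gives $2^{z_1}n^{2z_1}\equiv 2g^2\pmod a$, that is,
\[
g^2\equiv 2^{\,z_1-1}n^{2z_1}\pmod a .
\]
As $\gcd(2ng,a)=1$, taking the Jacobi symbol $\bigl(\tfrac{\cdot}{a}\bigr)$ of both sides yields $1=\bigl(\tfrac 2a\bigr)^{z_1-1}$. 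But $a\equiv\pm 3\pmod 8$, so $\bigl(\tfrac 2a\bigr)=-1$, and therefore $z_1-1$ is even, i.e.\ $z_1$ is odd.

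It remains to show $x_1=y_1=z_1=1$, which is routine bookkeeping. The even member of $\{g,h\}$ has $2$-adic valuation $v_2(gh)=v_2(b^{y_1})-1=2y_1-1$; if $y_1\ge 2$ this is $\ge 3$, so $c^{z_1}=g^2+h^2\equiv 1\pmod 8$, contradicting $c^{z_1}\equiv 5^{z_1}\equiv 5\pmod 8$ (as $z_1$ is odd). Hence $y_1=1$, and then $b^{y_1}=2gh$ gives $mn=gh$, so $g\mid mn$ and $g\le mn$, while $2g^2=(g^2-h^2)+(g^2+h^2)=a^{x_1}+c^{z_1}\ge c^{z_1}$. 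If $z_1\ge 3$ then $2g^2\ge c^3\ge(2mn)^3=8(mn)^3$, whence $(mn)^2\ge g^2\ge 4(mn)^3$, i.e.\ $4mn\le 1$, which is absurd; so $z_1=1$. Finally, with $y_1=z_1=1$ we have $a^{x_1}=g^2-h^2<g^2+h^2=c$, whereas $x_1\ge 2$ would give $a^{x_1}\ge a^2\ge(m+n)^2>m^2+n^2=c$, a contradiction. Thus $x_1=y_1=z_1=1$, i.e.\ $(x,y,z)=(2,2,2)$.
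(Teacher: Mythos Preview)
Your proof is correct. The overall arc matches the paper's---show $z$ even, pass to the primitive Pythagorean parametrization, then pin down $y_1,z_1,x_1$---but the decisive step is handled differently. The paper does \emph{not} prove directly that $z_1$ (their $Z$) is odd; instead it splits into cases: when $Z$ is even it works modulo $16$ to force $X$ even and then invokes Fermat's theorem that $A^4-B^4=C^2$ has no nontrivial solutions (their Lemma~2.2), while when $Z$ is odd it uses $c^Z\equiv5\pmod 8$ together with the $2\|st$ observation to rule out $Y\ge 2$. Your single Jacobi-symbol computation $g^2\equiv 2^{\,z_1-1}n^{2z_1}\pmod a$, combined with $\bigl(\tfrac{2}{a}\bigr)=-1$ since $a\equiv\pm3\pmod 8$, forces $z_1$ odd in one stroke and so eliminates the need for Lemma~2.2 altogether; the remaining size estimates are then essentially the same as the paper's. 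In that sense your route is slightly more economical: it replaces a case split plus an appeal to Fermat's descent by one quadratic-residue calculation.
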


\begin{proof} Let $(x, \, y,\, z)$ be a positive integer solution  of (\ref{eqeven}) with $2|\gcd(x, \, y)$ and $(x, \, y,\, z)\ne(2, \, 2,\, 2)$.  Since $m^2+n^2\equiv5\pmod{8}$ and $(m^2+n^2)^z=(m^2-n^2)^x+(2mn)^y\equiv1\pmod{8}$, we obtain that $2|z$. Put
$$x=2X, \quad y=2Y, \quad z=2Z,$$
then we have
\begin{equation}\label{eqle1}
(m^2-n^2)^X=u^2-v^2, \quad (2mn)^Y=2uv, \quad (m^2+n^2)^Z=u^2+v^2,\end{equation}
where $u, \, v$ are positive integers with $u>v$.
If $Y=1$ and $Z=1$, then it is easy to see that $X=1$, and we are done.

If $Y=1$ and $Z>1$, then we have
$$(2mn)^2=(m^2+n^2)^{2Z}-(m^2-n^2)^{2X}\ge(m^2+n^2)^{Z}+(m^2-n^2)^{X}$$
$$>(m^2+n^2)^{Z}>(2mn)^2,$$
a contradiction.  Finally we consider the case where $Y>1$. If $Y>1$ and $Z$ is even,  we have
$$m^2+n^2\equiv5\pmod{8}, \quad m^2-n^2\equiv\pm5\pmod{8}.$$
Considering equation (\ref{eqeven}) by taking modulo 16, we have
$$(m^2-n^2)^{2X}\equiv1\pmod{16},$$
hence $2|X$, which is impossible by Lemma \ref{le2} since $4|x$, $4|z$ and $(m^2+n^2)^{x}-(m^2-n^2)^{y}=(2mn)^{2Y}$. Therefore $Z$ is odd when $Y>1$. Now
$$(m^2+n^2)^{Z}\equiv5\pmod{8}.$$
It follows from (\ref{eqle1})  that $(m^2+n^2)^Z=s^2+t^2\equiv5\pmod{8}$, hence $2||st$  by Lemma \ref{le1} , which contradicts to $2st=(2mn)^Y$ and $Y>1$. This completes the proof.\end{proof}

\begin{lemma}\label{le3} Let $(x,\, y,\, z)$ be a solution of (\ref{eq1}) with $y\ge2$. Suppose that $2||mn$. Then both $x$ and $z$ are even. \end{lemma}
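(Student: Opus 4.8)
The plan is to first pin down the parity of $x$ by a Jacobi symbol identity obtained from reducing $(\ref{eq1})$ modulo $c:=m^2+n^2$, and then to read off the parity of $z$ from a congruence modulo $8$. I would begin by recording the arithmetic content of $2\|mn$: since $\gcd(m,n)=1$ and $m\not\equiv n\pmod2$, exactly one of $m,n$ is even and it is $\equiv2\pmod4$; consequently $c\equiv5\pmod8$ (so $c\equiv1\pmod4$), the integer $a:=m^2-n^2$ is odd, and $b:=2mn$ can be written $b=4d$ with $d:=mn/2$ odd. Moreover $\gcd(a,c)=\gcd(b,c)=\gcd(d,c)=1$, because $c=m^2+n^2$ is odd and coprime to both $m$ and $n$. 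Reducing $(\ref{eq1})$ modulo $c$ then gives $a^x\equiv-b^y\pmod c$.

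Next I would apply the Jacobi symbol $\left(\frac{\,\cdot\,}{c}\right)$ (legitimate since $c\ge5$ is odd and $a,b$ are prime to $c$), obtaining $\left(\frac{a}{c}\right)^{x}=\left(\frac{-1}{c}\right)\left(\frac{b}{c}\right)^{y}$, and then evaluate each symbol. From $c\equiv1\pmod4$ we get $\left(\frac{-1}{c}\right)=1$; from $c\equiv5\pmod8$ we get $\left(\frac{2}{c}\right)=-1$, so, using $a\equiv-2n^2\pmod c$ and $\gcd(n,c)=1$, $\left(\frac{a}{c}\right)=\left(\frac{-1}{c}\right)\left(\frac{2}{c}\right)\left(\frac{n^2}{c}\right)=-1$. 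For the last one, $b=4d$ gives $\left(\frac{b}{c}\right)=\left(\frac{d}{c}\right)$; quadratic reciprocity, whose sign is trivial because $c\equiv1\pmod4$, gives $\left(\frac{d}{c}\right)=\left(\frac{c}{d}\right)$; and $\left(\frac{c}{d}\right)=1$ since every prime $p\mid d$ divides exactly one of $m,n$, which makes $c=m^2+n^2$ congruent to a nonzero square modulo $p$. Substituting these values yields $(-1)^{x}=1$, so $x$ is even.

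Finally, since $y\ge2$ and $b=4d$ we have $16\mid b^y$, so reducing $(\ref{eq1})$ modulo $8$ gives $a^x\equiv c^z\pmod8$. Because $a$ is odd and $x$ is even, $a^x=(a^2)^{x/2}\equiv1\pmod8$; and because $c\equiv5\pmod8$, $c^z\equiv5^z\pmod8$, which equals $1$ precisely when $z$ is even. Hence $z$ is even, which together with the previous step proves the lemma.

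I expect the delicate point to be the evaluation $\left(\frac{b}{c}\right)=1$: this is exactly the more involved Jacobi symbol computation alluded to in the introduction, and it requires separating $b=2mn$ into its $2$-part and its odd part $d$, invoking reciprocity with the correct (here trivial) sign, and using $\gcd(m,n)=1$ together with the factorization of $d$ to see that $m^2+n^2$ is a square modulo each prime of $d$. The remaining steps are routine modular arithmetic; note that the hypothesis $y\ge2$ is used only in the last step and is essential there, since for $y=1$ the same computation would instead force $z$ to be odd.
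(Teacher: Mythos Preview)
Your proof is correct and follows essentially the same route as the paper: Jacobi symbols modulo $c=m^2+n^2$ to force $x$ even, then a modulo-$8$ computation using $y\ge 2$ to force $z$ even. The only (minor) difference is that the paper evaluates $\left(\frac{2mn}{c}\right)=1$ in one stroke via the congruence $2mn\equiv (m+n)^2\pmod{c}$, whereas you split off the $2$-part and invoke reciprocity on the odd part $d=mn/2$; both arguments are fine, though the paper's trick is worth knowing.
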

\begin{proof}  Let $(x,\, y,\, z)$ be a solution of (\ref{eq1}). Since $2||mn$, so $m^2+n^2\equiv5\pmod{8}$ and we have
$$\left(\frac{2mn}{m^2+n^2}\right)=\left(\frac{(m+n)^2}{m^2+n^2}\right)=1, \quad \left(\frac{m^2-n^2}{m^2+n^2}\right)=\left(\frac{2m^2}{m^2+n^2}\right)=\left(\frac{2}{m^2+n^2}\right)=-1.$$
Taking (\ref{eq1}) modulo $m^2+n^2$, we have $\left(\frac{m^2-n^2}{m^2+n^2}\right)^x=\left(\frac{-1}{m^2+n^2}\right)\left(\frac{2mn}{m^2+n^2}\right)^y$, i.e. $(-1)^x=1$, so $2|x$.  In view of $y\ge2$, (\ref{eqle1}) and $4|2mn$,
$$5^z\equiv(m^2+n^2)^z=(m^2-n^2)^x+(2mn)^y\equiv1\pmod{8}.$$
It follows that $z$ is even.\end{proof}

\section{A simple proof of Proposition 1.1}

In this section, we will present an elementary and simple proof of Proposition 1.1.

{\bf A simple proof of Proposition 1.1:} Let $(x, \, y,\, z)$ be a solution of (\ref{eq1}) with $y\ge2$. Noting that $2||mn$, by Lemma \ref{le3}, $2|x$ and $2|z$. If $2|y$, then (\ref{eq1}) has only the solution $(x, \, y,\, z)=(2, \, 2,\, 2)$  by Proposition \ref{th2.1}. Hence we may assume that $2\not|m$ , $2||n, n=2n', 2\not|n'$ and $2\not|y$.
Let $t$ be the positive integer with $2^t||z$. Since $2|x$, we have
$$(4mn')^y=(m^2+4n'^2)^z-(m^2-4n'^2)^x>(m^2+4n'^2)^{z/2}.$$
If $t=1$, then we have $y\ge3$ because $2\not|y$ and $y>1$. If $t>1$, then $y\ge z/2+1\ge t+1$, and thus $2y\ge2(t+1)>t+2$.

Taking modulo $2^{t+3}$ for (\ref{eq1}), we get
$$(m^2-4n'^2)^x+2^{2y}(mn')^y\equiv(m^2-4n'^2)^x\equiv(m^2+4n'^2)^z\equiv1+2^{t+2}\pmod{2^{t+3}},$$
which yields $2^t||x$ since $m^2-4n'^2\equiv\pm5\pmod{8}$.

Let $x=2^tX$ and $z=2^tZ$, where $X$ and $Z$ are positive integers and $2\not|XZ$.

{\bf Case I: $t$ is even}: By (1),  we have
$$(m^2+4n'^2)^{2^tZ}-(m^2-4n'^2)^{2^tX}=$$
$$\left((m^2+4n'^2)^Z-(m^2-4n'^2)^X\right)\prod_{i=0}^{t-1}\left((m^2+4n'^2)^{2^iZ}+(m^2-4n'^2)^{2^iX}\right)=2^{2y}(mn')^y.$$
Since $\gcd(m, n')=1$, it is easy to show that the greatest common divisor of any two terms in the above product is 2 and $(m^2+4n'^2)^Z-(m^2-4n'^2)^X\equiv0\pmod{8}$, hence we have
\begin{equation}\label{eq21}
(m^2+4n'^2)^Z+(m^2-4n'^2)^X=2(m_1n_1)^y\end{equation} and
\begin{equation}\label{eq22}
(m^2+4n'^2)^Z-(m^2-4n'^2)^X=2^{2y-t}(m_2n_2)^y,\end{equation} where $m_i|m$, $n_i|n', i=1, 2$ and $\gcd(m_1, \, m_2)=1, \, \gcd(n_1, \, n_2)=1$.
By (\ref{eq21}) and (\ref{eq22}), we have
\begin{equation}\label{eq23}
(m^2+4n'^2)^Z=(m_1n_1)^y+2^{2y-t-1}(m_2n_2)^y.\end{equation}
In view of (\ref{eq23}), $2y-t-1\ge t+1\ge3$ and $2\not|yZ$, we have
\begin{equation}\label{eq24}
m_1n_1\equiv (m_1n_1)^y\equiv m^2+n^2\equiv5\pmod{8}.\end{equation}
For any prime factor $p$ of $n_1$, by (\ref{eq21}),
$$m^{2Z}+n'^{2X}\equiv0\pmod{p},$$
it follows that $p\equiv1\pmod{8}$. Hence $n_1\equiv1\pmod{8}$, and so $m_1\equiv5\pmod{8}$ by (\ref{eq24}). Similarly, by (\ref{eq22}) we have $m_2\equiv1\pmod{8}$.

On the other hand, since $t$ is even,  it follows from (\ref{eq23}) that
$$\left(\frac{2m_2n_2}{m_1}\right)=\left(\frac{n'^2}{m_1}\right)=1, \quad \left(\frac{2m_2n_2}{n_1}\right)=1$$
and
$$\left(\frac{m_1n_1}{m_2}\right)=\left(\frac{n'^2}{m_2}\right)=1, \quad \left(\frac{m_1n_1}{n_2}\right)=1.$$
In view of $t$ is even, $n_1\equiv m_2\equiv1\pmod{8}$ and $m_1\equiv5\pmod{8}$, we have
\begin{equation}\label{eq25}
\left(\frac{m_2n_2}{m_1}\right)=-1, \quad \left(\frac{m_2n_2}{n_1}\right)=1\end{equation}
and
\begin{equation}\label{eq26}
\left(\frac{m_1n_1}{m_2}\right)=1, \quad \left(\frac{m_1n_1}{n_2}\right)=1.\end{equation}

Let $\left(\frac{m_2}{m_1}\right)=u, \, u\in\{-1, 1\}$, by the first equalities of (\ref{eq25}) and (\ref{eq26}), we have
\begin{equation}\label{eq27}
\left(\frac{n_2}{m_1}\right)=-u, \quad \left(\frac{n_1}{m_2}\right)=u.\end{equation}
Now, by the second equalities of (\ref{eq25}) and  (\ref{eq27}), we get
\begin{equation}\label{eq28}\left(\frac{n_2}{n_1}\right)=u.\end{equation}
By the first equality of (\ref{eq27}) and the second equality of (\ref{eq26}), we have
\begin{equation}\label{eq29}\left(\frac{n_2}{n_1}\right)=-u.\end{equation}
Therefore we derive a contradiction from (\ref{eq28}) and (\ref{eq29}).

{\bf Case II: $t$ is odd.} Similarly, by (1), we have
\begin{equation}\label{eq230}
\left(m^2+4n'^2\right)^{2^{t-i}Z}+\left(m^2-4n'^2\right)^{2^{t-i}X}=2(m_in_i)^y, \,\, i=1, 2, \ldots, t-1,\end{equation}
\begin{equation}\label{eq231}
\left(m^2+4n'^2\right)^{Z}+\left(m^2-4n'^2\right)^{X}=2(m_tn_t)^y\end{equation}
and
\begin{equation}\label{eq232}
\left(m^2+4n'^2\right)^{Z}-\left(m^2-4n'^2\right)^{X}=2^{2y-t}(m_{t+1}n_{t+1})^y.\end{equation}
Similarly, we have $m_1\equiv n_1\equiv \cdots m_{t-1}\equiv n_{t-1}\equiv n_t\equiv m_{t+1}\equiv1\pmod{8}$ and $m_t\equiv m^2+4n'^2\equiv5\pmod{8}$.
By (\ref{eq230}), we have
\begin{equation}\label{eq233}
\left(\frac{2m_in_i}{m+2n'}\right)=1, \,\, i=1, 2, \ldots, t-1.\end{equation}
Since $m_1\equiv n_1\equiv \cdots m_{t-1}\equiv n_{t-1}\equiv1\pmod{8}$, by (\ref{eq233})
\begin{equation}\label{eq234}
\left(\frac{2}{m+2n'}\right)=\left(\frac{n'}{m_i}\right)\left(\frac{m}{n_i}\right), \,\, i=1, 2, \ldots, t-1.\end{equation}

By (\ref{eq231}), we have
\begin{equation}\label{eq235}
\left(\frac{2}{m+2n'}\right)=\left(\frac{2m_tn_t}{m+2n'}\right).\end{equation}
Since $m_t\equiv5\pmod{8}$ and $ n_t\equiv1 \pmod{8}$, by (\ref{eq235})
\begin{equation}\label{eq236}
1=\left(\frac{2n'}{m_t}\right)\left(\frac{m}{n_t}\right)=-\left(\frac{n'}{m_t}\right)\left(\frac{m}{n_t}\right).\end{equation}

By (\ref{eq232}) and $t$ is odd, we have
\begin{equation}\label{eq237}
\left(\frac{2}{m+2n'}\right)=\left(\frac{2m_{t+1}n_{t+1}}{m+2n'}\right).\end{equation}
Since $m_{t+1}\equiv1\pmod{8}$, by (\ref{eq237})
\begin{equation}\label{eq238}
1=\left(\frac{2n'}{m_{t+1}}\right)\left(\frac{m}{n_{t+1}}\right)=\left(\frac{n'}{m_{t+1}}\right)\left(\frac{m}{n_{t+1}}\right).\end{equation}
Combine the three equations (\ref{eq234}), (\ref{eq236}) and (\ref{eq238}), we obtain
$$\left(\frac{2}{m+2n'}\right)^{t-1}=-\prod_{i=1}^{t+1}\left(\frac{n'}{m_i}\right)\left(\frac{m}{n_i}\right)=-\left(\frac{n'}{m}\right)\left(\frac{m}{n'}\right)=-1,$$
contradicts to the fact that $t$ is odd. This completes the proof. $\Box$

 \end{document}